\documentclass[12pt]{amsart}
\usepackage{amscd,amssymb,url}
\numberwithin{equation}{section}
\usepackage{tikz}
\usepackage{tabularx} 
\usepackage{tabu}
\usepackage{anyfontsize}
\usepackage{longtable}
\usepackage[plainpages,urlcolor=blue]{hyperref}

\usepackage[all]{xy}

\topmargin=0.1in
\textwidth6.10in
\textheight8.60in
\oddsidemargin=0.2in
\evensidemargin=0.2in

\theoremstyle{plain}
\newtheorem{thm}[subsection]{Theorem}
\newtheorem{lem}[subsection]{Lemma}
\newtheorem{prop}[subsection]{Proposition}
\newtheorem{cor}[subsection]{Corollary}

\theoremstyle{definition}
\newtheorem{rk}[subsection]{Remark}
\newtheorem{definition}[subsection]{Definition}
\newtheorem{ex}[subsection]{Example}

\numberwithin{equation}{section}
\setcounter{tocdepth}{1}

\newcommand{\al}{{\alpha}}
\newcommand{\be}{{\beta}}
\newcommand{\om}{{\omega}}

\newcommand{\SSS}{{\mathcal S}}
\newcommand{\PPP}{{\mathcal P}}

\newcommand{\Z}{\mathbb{Z}}

\newcommand{\C}{\mathbb{C}}
\newcommand{\PP}{\mathbb{P}}

\DeclareMathOperator{\mdr}{mdr}
\DeclareMathOperator{\Hess}{Hess}
\DeclareMathOperator{\tr}{tr}
\DeclareMathOperator{\adj}{adj} 
\DeclareMathOperator{\Jac}{Jac}
\DeclareMathOperator{\Fix}{Fix}

\makeatletter
\newcommand{\thickhline}{%
    \noalign {\ifnum 0=`}\fi \hrule height 1pt
    \futurelet \reserved@a \@xhline
}
\newcolumntype{"}{@{\hskip\tabcolsep\vrule width 1pt\hskip\tabcolsep}}
\makeatother


\begin{document}

\author[Alexandru Dimca]{Alexandru Dimca}
\address{Universit\'e C\^ ote d'Azur, CNRS, LJAD, France and Simion Stoilow Institute of Mathematics,
P.O. Box 1-764, RO-014700 Bucharest, Romania.
}
\email{dimca@unice.fr}

\author[Giovanna Ilardi]{Giovanna Ilardi}
\address{Dipartimento Matematica Ed Applicazioni ``R. Caccioppoli''
Universit\`{a} Degli Studi Di Napoli ``Federico II'' Via Cintia -
Complesso Universitario Di Monte S. Angelo 80126 - Napoli - Italia.}
\email{giovanna.ilardi@unina.it}

\author[Grzegorz Malara]{Grzegorz Malara}
\address{Department of Mathematics,
University of the National Education Commission Krakow,
Podchor\c a\.zych 2,
PL-30-084 Krak\'ow, Poland.}
\email{grzegorzmalara@gmail.com}
\author[Piotr Pokora]{Piotr Pokora}
\address{Department of Mathematics,
University of the National Education Commission Krakow
Podchor\c a\.zych 2,
PL-30-084 Krak\'ow, Poland.}
\email{piotr.pokora@up.krakow.pl}

\title[Free curves by adding osculating conics to a given cubic curve]{Construction of free curves by adding osculating conics to a given cubic curve}

\begin{abstract} 
In the present article we construct new families of free and nearly free curves starting from a plane cubic curve $C$ and adding some of its hyperosculating conics. We present results that involve nodal cubic curves and the Fermat cubic. In addition, we provide new insight into the geometry of the $27$ hyperosculating conics of the Fermat cubic curve using well-chosen group actions.
\end{abstract}
 
\maketitle
 

\section{Introduction}

In a recent paper \cite{DIPS}, the authors have constructed new examples of free and nearly free arrangements consisting of smooth plane curves and (high order) inflectional lines. There seems to be a general pattern that arrangements of (smooth) curves with a high order of contact between them lead to new examples of free curves with beautiful geometry.
In this paper we illustrate this fact by constructing new examples of free and nearly free arrangements consisting of a plane cubic curve, most of the time the Fermat cubic or a nodal cubic, and hyperosculating conics, which are defined in Section $2$. We present in Section $3$ results providing a complete characterization of the free arrangements that can be constructed using a nodal cubic and $k$ hyperosculating conics, with $1\leq k \leq 3$, see Theorem \ref{thmIO}. Then, in Section $4$, we construct many new examples of free and nearly free arrangements using the Fermat cubic curve and the associated $27$ hyperosculating conics. Our computations are supported by \verb}SINGULAR} \cite{Singular}. Moreover, we provide new insight into the geometry of the $27$ hyperosculating conics of the Fermat cubic curve using well-known group actions.
In particular, we show that these $27$ hyperosculating conics can be partitioned into $9$ classes, such that two conics are in the same class if and only if they are tangent to each other at exactly one point, see Proposition \ref{A2}. This result allows us to obtain free curves from the Fermat cubic $C$ by adding $2$ (resp. $3$)  hyperosculating conics that are tangent to each other, see Theorem \ref{thm2con} (resp. Theorem \ref{thm3con}). 

Verification of all symbolic computations performed can be done with the code available at \cite{code}.

\section{Preliminaries}

Let $S= \mathbb{C}[x,y,z]$ be the graded polynomial ring in three variables $x,y,z$ with complex coefficients. Let $C: f=0$ be a reduced curve of degree $d$ in the complex projective plane $\mathbb{P}^2$. We denote by $J_f=(f_x,f_y,f_z)$ the Jacobian ideal of $f$, i.e., the homogeneous ideal in $S$ spanned by the partial derivatives $f_x,f_y,f_z$ of $f.$

Consider the graded $S$-module of Jacobian syzygies of $f$, namely $$AR(f)=\bigg\{(a,b,c)\in S^3 : af_x+bf_y+cf_z=0 \bigg\}.$$
Let $ \mdr(f):=\min\{k : AR(f)_k\neq (0)\}$ denote  the minimal degree of a Jacobian syzygy for $f$.  As  $\mdr(f)=0$ implies that  the curve $C$ is a union  of lines, in this paper we may assume that  $\mdr(f)\geq 1$.

We say that  $C: f=0$ is a $m$-syzygy curve if the $S$-module $AR(f)$ is minimally generated by $m$ homogeneous syzygies, $r_1,r_2,\ldots,r_m$, of degree $d_i=\deg r_i$, ordered such that $$1\leq d_1 \leq d_2 \leq\ldots\leq d_m.$$ 
The multiset $(d_1 ,d_2 ,\ldots,d_m)$ is called the exponents of the plane curve $C$. 

Some of the $m$-syzygy curves have been carefully studied, and we refer to \cite{CDI} and \cite{mdr} for the details. 
\begin{definition}
\label{def:Cfree}
A $2$-syzygy curve $C$ is said to be {\emph{free}}, and in that case we have $d_{1}+d_{2}=d-1$.    
\end{definition}
\begin{rk}
    The name \emph{free curve} in Definition \ref{def:Cfree} follows from the fact that for such a curve the $S$-module $AR(f)$ is a free $S$-module of rank $2$.
\end{rk}
\begin{definition}
A $3$-syzygy curve $C$  is said to be {\emph{nearly free}} exactly when $d_3 =d_2$ and $d_1+d_2=d$.
\end{definition}

In order to check whether a given curve is either free or nearly free we can use the below criteria, see \cite[Corollary 1.2 and Theorem 1.3]{Dmax}. For a given reduced plane curve $C : \, f=0$ of degree $d$ we denote by $\tau(C)$ the total Tjurina number of $C$, i.e., the degree of the Jacobian ideal $J_f$. To simplify notation, let $r := \mdr(f) = \mdr(C)$. In order to decide whether our curve is either free or nearly free, we will use the following results.
\begin{thm}[{\cite[Corollary 1.2]{Dmax}}]
\label{freet}
A reduced plane curve $C$ is free if and only if $r \leq (d-1)/2$ and
\begin{equation*} 
\label{eq:freed}
 r^2-r(d-1)+(d-1)^2=\tau(C).
\end{equation*}
\end{thm}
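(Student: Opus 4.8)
The plan is to encode freeness as the splitting of a rank-two vector bundle on $\mathbb{P}^2$ and to read both implications off a single Chern-class identity. Write $f$ for a defining polynomial of $C$, put $r=\mdr(f)$ and $M(f)=S/J_f$, and let $E_C$ be the coherent sheaf on $\mathbb{P}^2$ associated with the graded module $AR(f)$. First I would record the structural facts: since $AR(f)=\ker\!\bigl(S^3\xrightarrow{(f_x,f_y,f_z)}S\bigr)$ is a second syzygy over the regular ring $S$, it is reflexive of rank $2$, so $E_C$ is a reflexive rank-two sheaf on the smooth surface $\mathbb{P}^2$, hence a vector bundle, and $AR(f)$, being reflexive and hence saturated, equals $\bigoplus_{k}H^0(\mathbb{P}^2,E_C(k))$; therefore $C$ is free if and only if $E_C$ is a direct sum of two line bundles. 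Next I would sheafify the exact sequence $0\to AR(f)(1-d)\to S(1-d)^3\to S\to M(f)\to 0$ and use that, $C$ being reduced, $\widetilde{M(f)}$ is a torsion sheaf of length $\tau(C)$ concentrated at the singular points; the Whitney formula then yields $c_1(E_C)=1-d$ and $c_2(E_C)=(d-1)^2-\tau(C)$, so that for every integer $k$
\[
c_2(E_C(k))=k^2-k(d-1)+(d-1)^2-\tau(C).
\]

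For the direction ``free $\Rightarrow$ the two conditions'', suppose $C$ is free, so $E_C\cong\OO(-d_1)\oplus\OO(-d_2)$ with $d_1\le d_2$. Comparing first Chern classes gives $d_1+d_2=d-1$, and since $d_1$ is the smallest degree in which $AR(f)$ is nonzero, $d_1=r$; hence $r=d_1\le d_2=d-1-r$, i.e.\ $r\le(d-1)/2$. Moreover $c_2(E_C)=d_1d_2=r(d-1-r)$, which combined with the displayed identity forces $\tau(C)=r^2-r(d-1)+(d-1)^2$.

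For the converse I would argue as follows. Assume $r\le(d-1)/2$ and $\tau(C)=r^2-r(d-1)+(d-1)^2$; by the displayed formula this says exactly $c_2(E_C(r))=0$. Choose a nonzero minimal-degree syzygy $s\in AR(f)_r=H^0(E_C(r))$. Its zero locus must be zero-dimensional: if $s$ vanished along a curve $D$ of degree $m\ge 1$, then inside the locally free sheaf $E_C(r)$ the section $s$ would be divisible by an equation of $D$, yielding a nonzero element of $AR(f)_{r-m}$ and contradicting $r=\mdr(f)$. Hence $s$ fits into a Koszul-type sequence $0\to\OO\xrightarrow{s}E_C(r)\to\I_Z(2r-d+1)\to0$ with $\deg Z=c_2(E_C(r))=0$, so $Z=\varnothing$ and the cokernel is the line bundle $\OO(2r-d+1)$. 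Because $H^1(\mathbb{P}^2,\OO(d-1-2r))=0$, this extension splits, so $E_C\cong\OO(-r)\oplus\OO(r-d+1)$ is a sum of line bundles, $AR(f)$ is free, and $C$ is free. (Here the hypothesis $r\le(d-1)/2$ is actually recovered rather than used, since $r=\mdr(f)=\min\{r,\,d-1-r\}$.)

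The step I expect to be the main obstacle is the converse, specifically showing that a minimal-degree section of $E_C$ cannot degenerate in codimension one and that the resulting zero scheme has length exactly $c_2(E_C(r))$ --- that is, seeing precisely how the geometry of sections of a rank-two bundle detects the minimal syzygy degree; once that is in place, the splitting is routine Chern-class bookkeeping together with the vanishing of $H^1$ of line bundles on $\mathbb{P}^2$. A purely algebraic alternative would avoid $E_C$ altogether: work with the finite-length module $N(f)=\widehat{J_f}/J_f$, establish the du Plessis--Wall inequality $\tau(C)\le r^2-r(d-1)+(d-1)^2$ by estimating the Hilbert function of $M(f)$, invoke the self-duality $\dim N(f)_k=\dim N(f)_{3(d-2)-k}$ (graded local duality) to see that the gap to equality is a nonnegative combination of the numbers $\dim N(f)_k$, and deduce that equality forces $N(f)=0$, which is once more equivalent to the freeness of $C$.
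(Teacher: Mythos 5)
Your argument is correct, but note that the paper does not prove this statement at all: it is quoted verbatim from \cite[Corollary 1.2]{Dmax}, so the only meaningful comparison is with the proof in that reference. Your route is the geometric one: you identify $AR(f)$ with a rank-two reflexive, hence locally free, sheaf $E_C$ on $\mathbb{P}^2$, compute $c_1(E_C)=1-d$ and $c_2(E_C)=(d-1)^2-\tau(C)$ from the sheafified Jacobian resolution, read the forward implication off the splitting $E_C\cong \mathcal{O}(-d_1)\oplus\mathcal{O}(-d_2)$, and for the converse show that a minimal-degree section of $E_C(r)$ has no divisorial vanishing (the divisibility argument is exactly the right point, since it is what guarantees $\deg Z=c_2(E_C(r))=0$), after which the Koszul sequence splits because $H^1(\mathbb{P}^2,\mathcal{O}(d-1-2r))=0$; your parenthetical remark that $r\le (d-1)/2$ is then recovered from $r=\mdr(f)=\min(r,d-1-r)$ is also accurate. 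The proof in \cite{Dmax} is instead the ``purely algebraic alternative'' you sketch at the end: it works with the Jacobian module $N(f)=\widehat{J_f}/J_f$, the du Plessis--Wall bounds and the self-duality of $N(f)$, and characterizes freeness by $N(f)=0$. The two approaches buy different things: yours is self-contained, explains the quadratic expression $r^2-r(d-1)+(d-1)^2$ as a second Chern class, and yields the exponents $(r,d-1-r)$ immediately; the algebraic route of \cite{Dmax} extends with little extra work to the nearly free criterion (Theorem \ref{nfreet}) and to the maximal Tjurina curves used in Remark 3.3, which is why the paper cites that source. I see no gap in your argument.
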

\begin{thm}[{\cite[Theorem 1.3]{Dmax}}]
\label{nfreet}
A reduced plane curve $C$ is nearly free if and only if 
\begin{equation*} 
\label{eq:nearlyFreeMdr}
 r^2-r(d-1)+(d-1)^2=\tau(C)+1.
\end{equation*}
\end{thm}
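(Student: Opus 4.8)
The plan is to establish the two implications of the stated equivalence separately. The implication \emph{$C$ nearly free $\Rightarrow$ the numerical identity} is a direct Hilbert--series computation from the minimal free resolution of $AR(f)$. The converse is the substantive direction, and I would deduce it by combining the structure of that resolution with Theorem~\ref{freet} and with the classical du Plessis--Wall upper bound for $\tau(C)$ in terms of $\mdr(f)$ (see \cite{mdr}).

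\emph{Forward direction.} Assume $C$ is nearly free with exponents $(d_1,d_2,d_2)$ and $d_1+d_2=d$. Since $AR(f)$ is minimally generated in degrees $d_1\le d_2\le d_2$, we have $r=\mdr(f)=d_1$. It is standard that $\pd_S AR(f)\le 1$ (see \cite{CDI}), so the minimal free resolution reads
\[
0\longrightarrow S(-e_1)\longrightarrow S(-d_1)\oplus S(-d_2)^{2}\longrightarrow AR(f)\longrightarrow 0 ,
\]
and the degree relation gives $e_1=d_1+2d_2-(d-1)=d_2+1$. Comparing the Hilbert series of $AR(f)$ obtained from this resolution with the one obtained from the exact sequence $0\to AR(f)\to S^3\to S(d-1)\to (S/J_f)(d-1)\to 0$, one extracts the identity $2\tau(C)=(d-1)^2+d_1^2+2d_2^2-e_1^2$. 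Substituting $e_1=d_2+1$ and $d_1=d-d_2=d-r$ and simplifying gives $\tau(C)+1=r^2-r(d-1)+(d-1)^2$, as desired.

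\emph{Converse direction.} Assume $r^2-r(d-1)+(d-1)^2=\tau(C)+1$. If $C$ were free, Theorem~\ref{freet} would force $\tau(C)=r^2-r(d-1)+(d-1)^2$, contradicting the hypothesis; hence $C$ is not free, so $\pd_S AR(f)=1$ and $AR(f)$ is minimally generated by $m\ge 3$ syzygies. Writing the resulting resolution
\[
0\longrightarrow \bigoplus_{j=1}^{m-2}S(-e_j)\longrightarrow \bigoplus_{i=1}^{m}S(-d_i)\longrightarrow AR(f)\longrightarrow 0,\qquad d_1=r ,
\]
one obtains, exactly as above, the two numerical identities $\sum_i d_i-\sum_j e_j=d-1$ and $\sum_i d_i^2-\sum_j e_j^2=2\tau(C)-(d-1)^2$. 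The plan is then to feed in the hypothesis on $\tau(C)$ together with the constraints available for a minimal resolution — namely $e_j>d_1=r$ for all $j$, the $d_i$ nondecreasing, and the du Plessis--Wall bound, which controls $d_m$ and $\sum_i d_i$ in terms of $d$ and $r$ and in particular forces $r\le d/2$ — and to show that the only admissible solution is $m=3$ with $d_2=d_3$ and $d_1+d_2=d$. That is precisely the statement that $C$ is nearly free.

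The step I expect to be the main obstacle is this last numerical squeeze: ruling out every $m$-syzygy configuration other than the nearly-free one, in particular excluding $m\ge 4$ and excluding a $3$-syzygy resolution with $d_2<d_3$ or $d_1+d_2\ne d$. I would run it by translating the hypothesis into the statement that the defect $(d-1)^2-r(d-1-r)-\tau(C)$ equals $1$, hence that the finite-length module $N(f)=\widehat{J_f}/J_f$ has a very small, self-dual Hilbert function — at most one-dimensional in each degree where it is nonzero — and then reading this information back through the exact sequence linking $N(f)$ to $AR(f)$; this should pin down the shape of the resolution. A secondary technicality is the boundary case $2r=d$ (which forces $d_1=d_2=d/2$), where one needs the sharpened form of the du Plessis--Wall inequality, together with a direct check in the degenerate case $d=2$.
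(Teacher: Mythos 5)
This statement is not proved in the paper at all: it is quoted verbatim from \cite[Theorem 1.3]{Dmax}, so the only comparison available is with the argument in that reference, whose strategy (du Plessis--Wall bounds plus the study of the Jacobian module $N(f)$) your sketch indeed mirrors. Your forward implication is correct and essentially complete: for a nearly free curve the minimal resolution of $AR(f)$ has shape $0\to S(-d_2-1)\to S(-d_1)\oplus S(-d_2)^2\to AR(f)\to 0$, and the two Hilbert--series identities $\sum_i d_i-\sum_j e_j=d-1$ and $\sum_i d_i^2-\sum_j e_j^2=2\tau(C)-(d-1)^2$ yield the stated equality.

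The converse, however, is where the entire content of the theorem lies, and there you have a genuine gap: the ``numerical squeeze'' is announced but not carried out, and the constraints you propose to feed in (the two identities, $e_j>r$, the ordering of the $d_i$, and the du Plessis--Wall bound forcing $r\le d/2$) are provably insufficient. For instance, take $d=7$, $r=3$, so the hypothesis gives $\tau(C)=26$; the numerical data $m=4$, $(d_1,d_2,d_3,d_4)=(3,3,5,5)$, $(e_1,e_2)=(4,6)$ satisfies both identities ($16-10=6$ and $68-52=16=2\cdot 26-36$), has all $e_j>r$, respects the ordering and even the finer minimality requirements on coefficient degrees, and is compatible with the du Plessis--Wall inequalities, yet it is not the nearly free shape. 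Ruling out such configurations (and the boundary case $2r=d$) is exactly what requires the deeper structural input you only gesture at: the self-duality of $N(f)=\widehat{J_f}/J_f$, the identification of the defect $(d-1)^2-r(d-1-r)-\tau(C)$ with the relevant values of the Hilbert function of $N(f)$, the characterization of nearly free curves by $N(f)\ne 0$ and $\dim N(f)_k\le 1$ for all $k$, and the sharpened du Plessis--Wall bound for $2r\ge d$. None of these are stated precisely or proved in your proposal, and ``this should pin down the shape of the resolution'' is precisely the step that constitutes the theorem; as written, the converse direction is a plausible plan rather than a proof.
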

\begin{rk}
Note that we can express the exponents for free and nearly free curves using $r$ and $d$. More precisely, if $C$ is free, then $(d_1,d_2)=(r,d-1-r)$. If $C$ is nearly free, then $d_{2}=d_{3}$, so we follow the established convention and list only the first two exponents, namely $(d_{1},d_{2})=(r,d-r)$.
\end{rk}

Let us now recall some facts about inflection points and then about sextactic points of a plane curve $C$.
If $p \in C$ is a smooth point of this curve, and $T_pC$ denotes the projective line tangent to $C$ at $p$, then $p$ is an {\it inflection point} if 
 $(C,T_pC)_p \geq 3$, where  $(C,T_pC)_p$ denotes the intersection multiplicity of the curves $C$ and $T_pC$ at the common point $p$. It is well-known that the inflection points of a curve $C$ are contained in the intersection of the curve with the associated Hessian curve, $H(C)$, with defining polynomial
\begin{equation}
\label{eq2}
h=\det \left(
  \begin{array}{ccccccc}
     f_{xx} & f_{xy}& f_{xz}  \\
     f_{xy} & f_{yy}& f_{yz}\\
     f_{xz} & f_{yz}& f_{zz}  \\
   \end{array}
\right).
\end{equation}

Cayley proved in \cite{Cayley} that for every smooth point $p$ on a curve $C$ of degree $d\geq 3$ there exists a unique conic $Q_p$ with the property that $(C,Q_p)_p \geq 5$, called the \emph{osculating conic} at $p$, see Definition \ref{def:Osculating Conic}. When $(C,Q_p)_p \geq 6$, $p$ is called a \emph{sextactic point}, and $Q_p$ is called the \emph{hyperosculating conic} at $p$.

The set of sextactic points of a plane curve $C$ is obtained by intersecting $C$ with some other curve. In \cite{Cayley1} Cayley computed a polynomial with the property that the sextactic points of $C$ are contained in the intersection of $C$ and its zero set, referred to as the \emph{second Hessian} of $C$, $H_2(C)$. The set of intersection points of $C$ and $H_2(C)$ contains not only sextactic points, but also singular points and higher order inflection points.
\begin{rk}
    The formula that Cayley provided was corrected in \cite{Moe}, to which we refer for the proofs of Cayley Theorem stated above.
\end{rk}
We have the following definitions and notations, making Cayley Theorem true.
\begin{definition}
\label{defSS}

Let $C: f=0$ be a plane curve of degree $d$. Then, its second Hessian $H_2(C)$ is a curve of degree $12d-27$ with the defining polynomial
\begin{multline*}(12d^2-
54d +57)h\Jac(f, 
h, \Omega_h)+ (d-
2)(12d-27)h 
\Jac(f, h, 
\Omega_f) \\
 -20(d-
2)^2 \Jac (f, h, 
\Psi),
\end{multline*}
where for polynomials $f,g,h$ we define

\begin{equation}
\label{eq1}
\Jac(f,g,h)=\det \left(
  \begin{array}{ccccccc}
     f_{x} & f_{y}& f_{z}  \\
      g_{x} & g_{y}& g_{z}\\
     h_{x} & h_{y}& h_{z}  \\
   \end{array}
\right).
\end{equation}Note that only the latter term is a proper Jacobian determinant, with
\begin{equation*}
\label{eq33}
\Psi= -\det \left(
  \begin{array}{ccccccc}
    0 & h_{x} & h_{y}& h_{z}  \\
     h_x & f_{xx} & f_{xy}& f_{xz}\\
    h_y & f_{xy} & f_{yy}& f_{yz}  \\
    h_z & f_{xz} & f_{yz}& f_{zz}\\
   \end{array}
\right).
\end{equation*}
Denote by $\Hess(\cdot)$ the classical Hessian matrix, then in the first two terms
$$\Omega= \tr(\Hess(C)^{\adj} \cdot \Hess (H)),
$$
i.e., it is the scalar product

\begin{equation*}
\label{eq3}
\Omega= {\left(
  \begin{array}{ccccccc}
     f_{yy} f_{zz} - f^2_{yz}  \\
     f_{xx}f_{zz} - f^2_{xz}\\
     f_{xx}f_{yy} -  f^2_{xy} \\
     f_{xy}f_{xz} - f_{xx} f_{yz}\\
     f_{xy}f_{yz} - f_{yy} f_{xz}\\
     f_{xz}f_{yz} - f_{zz} f_{xy}\\
   \end{array}
\right)}^T    \left(
  \begin{array}{ccccccc}
     h_{xx} \\
     h_{yy} \\
     h_{zz}  \\
     2 h_{yz} \\
     2 h_{xz} \\
     2 h_{xy} \\
      \end{array}
      \right),
\end{equation*}
and for a variable  $w \in \{x,y,z\}$, we obtain
\begin{equation*}
\label{eq31}
(\Omega_f)_w= {\left(
  \begin{array}{ccccccc}
    ( f_{yy} f_{zz} - f^2_{yz} )_w \\
    ( f_{xx}f_{zz} - f^2_{xz})_w \\
     (f_{xx}f_{yy} -  f^2_{xy})_w \\
    ( f_{xy}f_{xz} - f_{xx} f_{yz})_w \\
     (f_{xy}f_{yz} - f_{yy} f_{xz})_w \\
     (f_{xz}f_{yz} - f_{zz} f_{xy})_w \\
   \end{array}
\right)}^T    \left(
  \begin{array}{ccccccc}
     h_{xx} \\
     h_{yy} \\
     h_{zz}  \\
     2 h_{yz} \\
     2 h_{xz} \\
     2 h_{xy} \\
      \end{array}
      \right),
\end{equation*}

\begin{equation*}
\label{eq32}
(\Omega_h)_w= {\left(
  \begin{array}{ccccccc}
     f_{yy} f_{zz} - f^2_{yz}  \\
     f_{xx}f_{zz} - f^2_{xz}\\
     f_{xx}f_{yy} -  f^2_{xy} \\
     f_{xy}f_{xz} - f_{xx} f_{yz}\\
     f_{xy}f_{yz} - f_{yy} f_{xz}\\
     f_{xz}f_{yz} - f_{zz} f_{xy}\\
   \end{array}
\right)}^T    \left(
  \begin{array}{ccccccc}
     (h_{xx})_w \\
     (h_{yy})_w \\
     (h_{zz})_w \\
     (2 h_{yz})_w \\
     (2 h_{xz})_w\\
     (2 h_{xy})_w\\
      \end{array}
      \right).
\end{equation*}
\end{definition}
\begin{definition}[{\cite[Section 15]{Cayley}}]
\label{def:Osculating Conic}
Let $C\; : \; f=0$ be a plane curve of degree $d$ and let $\Omega$, $h$ and $\Psi$ be defined as in Definition \ref{defSS}. Define the polynomial $\Lambda = -3 \Omega h + 4\Psi$. If a point $p \in C$ is neither an inflection point nor singular, then the osculating conic $Q_p$ at $p$ is defined as a zero set of the polynomial
\begin{equation}
    D^2_f(p)-\left(\frac{2 D_h(p)}{3h(p)}+\Lambda(p)D_f(p)\right)D_f(p),
\end{equation}
where for any polynomial $f$ and point $p$ we have
\begin{equation*}
    \begin{gathered}
        D_f(p)=xf_x(p)+yf_y(p)+zf_z(p), \\
        D^2_f(p)=x^2f^2_x(p)+2xy(f_xf_y)(p)+y^2f^2_y(p)+2xz(f_xf_z)(p)+2yz(f_yf_z)(p)+z^2f^2_z(p).
\end{gathered}
\end{equation*}
\end{definition}

\section{Nodal cubics and hyperosculating conics}

The first  main result of this article is devoted to irreducible nodal cubics. It is well-known that all nodal cubics are projectively equivalent, and the number of sextactic points on a nodal cubic can be determined by the following theorem \cite[Chapter VI, Theorem 17]{Coolidge}.
\begin{thm}[Coolidge]
\label{Coo}
If an irreducible plane curve $C$ of degree $d$ and geometric genus $g$ has only $n$ nodes and $k$ simple cusps as singularities, and its dual $C^*$ has again only nodes and cusps as singularities, then the number of sextactic
points is 
\begin{equation*}
    3(d^2 - 2n  -3k+6(g-1)).
\end{equation*}
\end{thm}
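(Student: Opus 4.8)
The plan is to derive Coolidge's formula from the Pl\"ucker-type formula for the ramification divisor of a linear system on a smooth curve, applied to the system of conics, in exact parallel with the way the number of flexes comes from the system of lines. Let $\nu\colon \tX\to C\subset\PP^2$ be the normalization; since $C$ has only nodes and cusps, $\tX$ is smooth projective of genus $g=\binom{d-1}{2}-n-k$. Restriction of linear forms to $C$ and pullback gives a base-point-free $\mathfrak{g}^2_d$ on $\tX$, the line system $\LL$; restriction of quadratic forms gives the subsystem spanned by $x^2,y^2,z^2,xy,xz,yz$, and since $C$ is irreducible of degree $d\ge 3$ it lies on no conic, so this is a base-point-free $\mathfrak{g}^5_{2d}$, the conic system $\QQ$. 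By the formula $\deg R=(r+1)m+\binom{r+1}{2}(2g-2)$ for the ramification divisor $R$ of a base-point-free $\mathfrak{g}^r_m$, one gets $\deg R(\LL)=3d+6(g-1)$ and $\deg R(\QQ)=12d+30(g-1)$.

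The core of the proof is then a local computation of the weight $w(\tilde p)=\sum_i\bigl(a_i(\tilde p)-i\bigr)$ at each special point, where $\bigl(a_i(\tilde p)\bigr)$ is the sequence of contact orders of the members of the system with the branch of $C$ at $\tilde p$; this is elementary, using a local parametrization. For $\QQ$: a general smooth point of $C$ has contact sequence $(0,1,2,3,4,5)$ and weight $0$ (this is exactly Cayley's osculating conic, of contact $5$); an ordinary flex and a simple sextactic point each have sequence $(0,1,2,3,4,6)$ and weight $1$; each of the two branches of a node contributes $0$; and a simple cusp, with local model $t\mapsto(t^2,t^3)$, has sequence $(0,2,3,4,5,6)$ and weight $5$. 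For $\LL$ the same method gives weight $1$ at an ordinary flex, $0$ at each branch of a node, and $2$ at a simple cusp. The hypotheses that both $C$ and its dual $C^*$ have only nodes and cusps are precisely what guarantees that every flex is ordinary and every sextactic point is simple, so that these are the only points of $R(\LL)$ and $R(\QQ)$, each with the stated weight.

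Summing weights, $\LL$ yields $\phi+2k=3d+6(g-1)$ for the number $\phi$ of flexes, i.e.\ $\phi=3d(d-2)-6n-8k$; and $\QQ$ yields $N+\phi+5k=12d+30(g-1)$ for the number $N$ of sextactic points. Eliminating $\phi$ and substituting $g=\binom{d-1}{2}-n-k$ gives $N=12d^2-27d-24n-27k=3\bigl(d^2-2n-3k+6(g-1)\bigr)$, which is the claimed count.

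I expect the main obstacle to be the local analysis in the second step: one must verify that the higher-order terms in the parametrization of a simple cusp do not perturb the contact sequence, and, more delicately, that the hypothesis on $C^*$ really does exclude every degeneration --- hyperflexes (which would contribute $5$, not $1$, to $\deg R(\QQ)$), higher sextactic points, or exceptional behaviour of the net of conics at the singular points --- that would change a local weight. An alternative route, closer to the second Hessian $H_2(C)$ of Definition \ref{defSS}, is to intersect $C$ with $H_2(C)$ (which has degree $12d-27$) and show by a local computation that ordinary flexes do not lie on $H_2(C)$, while $(C\cdot H_2(C))_p=24$ at a node and $27$ at a simple cusp; B\'ezout then gives the same formula, at the cost of heavier explicit intersection-multiplicity computations with the polynomial of Definition \ref{defSS}.
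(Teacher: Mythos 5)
The paper does not prove this statement at all: it is quoted verbatim from Coolidge \cite[Chapter VI, Theorem 17]{Coolidge}, with the modern treatment of the surrounding Cayley theory delegated to \cite{Moe}. So there is no in-paper proof to match; your argument stands as an independent derivation, and it is essentially correct. The generalized Pl\"ucker formula $\deg R=(r+1)m+\binom{r+1}{2}(2g-2)$ is applied correctly to the line system ($3d+6(g-1)$) and the conic system ($12d+30(g-1)$), your local weights are right (weight $1$ at an ordinary flex and at a simple sextactic point, $0$ on a nodal branch, $2$ resp.\ $5$ at a simple cusp for the line resp.\ conic system, since the restricted monomials have pairwise distinct vanishing orders $0,1,2,3,4,6$ resp.\ $0,2,3,4,5,6$, so no cancellation can create new contact orders), and the arithmetic checks: $N=12d^2-27d-24n-27k=3\bigl(d^2-2n-3k+6(g-1)\bigr)$, which correctly returns $27$ for a smooth cubic, $3$ for a nodal cubic and $0$ for a cuspidal cubic, exactly as the paper uses it. The classical route of Cayley--Coolidge (and of \cite{Moe}) is in effect your ``alternative route'': intersect $C$ with the second Hessian $H_2(C)$ of degree $12d-27$ and compute local intersection numbers ($0$ at an ordinary flex, $24$ at a node, $27$ at a cusp), which is numerically consistent with your count via B\'ezout; your ramification-divisor argument buys independence from the explicit Cayley polynomial of Definition \ref{defSS}, at the cost of the genericity bookkeeping you yourself flag. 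That caveat is real but is inherent in the classical statement: the hypothesis on $C^*$ does exclude hyperflexes (they would produce worse-than-ordinary cusps on the dual), but it does not literally exclude, say, higher-order sextactic points or inflectional branches at the nodes, so ``the number of sextactic points'' must be read, as in all Pl\"ucker-type formulas, with the natural multiplicities (your weights); with that reading your proof is complete.
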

By the theorem, observe that if $C$ is a nodal cubic, then we have $d=3$, $n=1$ and $g=k=0$, hence $C$ has exactly $3$ sextactic points
and  $3$ hyperosculating conics. Note also that a cubic with a cusp singularity does not admit sextactic points.

We take the nodal cubic given by
\begin{equation}
\label{EE}
E : x^3 + y^{3} - xyz = 0.
\end{equation}
Now we can determine the three sextactic points by direct computations of $E$ and $H_2(E)$, or alternatively as in \cite[Appendix]{S}, and get
$$s_{1} = (1:1:2), \quad s_{2} = ( \omega: \omega^2 :2), \quad s_{3} = (\omega^2: \omega:2),$$
where $\omega = e^{2\pi i/3}$.
Moreover, we can determine the corresponding hyperosculating conics, namely
\begin{itemize}
    \item $Q_{1} \, : \, 21(x^2+y^2 )-22xy-6(x+y)z+z^2 =0,$
    \item $Q_{2} \, : \, 21(\omega x^2 + \omega^2 y^2 )-22xy-6(\omega^2 x+\omega y)z+z^2 =0,$
    \item $Q_{3} \, : \, 21(\omega^2 x^2 +\omega y^2)-22xy-6(\omega x+ \omega^2 y)z+z^2 =0.$
\end{itemize}
We can check via a direct calculation that each pair of conics $Q_{i}, Q_{j}$ with $i\neq j$ intersects along $4$ nodes, and each point $s_{i}$, where curves $E$ and $Q_{i}$ meet, delivers an $A_{11}$-singularity. The $A_{11}$-singularity occurs since at the point $s_{i}$ we have two smooth branches with contact of order $6$.

\begin{rk}
\label{symnodal}
The nodal cubic $E : x^3 + y^{3} - xyz = 0$ has a cyclic group of order $3$ leaving it invariant, namely the group $H$ generated by 
$$w : \,\, (x:y:z) \rightarrow (\omega x: \omega^2 y:z).$$
With $s_1,s_2,s_3$ the sextactic points on $E$, observe that 
$ws_1 = (\omega: \omega^2:2) =s_2 $, $ws_2 = (\omega^2:\omega:2)=s_3$, and $w s_3=(1:1:2)=s_1$, hence the $3$ points form an $H$-orbit. Similarly for the hyperosculating conics $Q_1,Q_2,Q_3,$ $wQ_1=Q_2$, $wQ_2=Q_3$, and $wQ_3=Q_1$, hence the hyperosculating conics also form an $H$-orbit.
Moreover, the group $H$ acts transitively on the set of pairs $(Q_i,Q_j)$ of
hyperosculating conics. This remark simplifies a lot the computations, e.g. in order to check that  each pair of conics $Q_{i}, Q_{j}$ with $i\neq j$ intersects along $4$ nodes, it is enough to verify this property for the pair $(Q_1,Q_2)$. 
\end{rk}

Having this preparation done, we can describe the exponents of the curve obtained from a nodal cubic and any number of hyperosculating conics.

\begin{thm}
\label{thmIO} \

a) If $C$ is an arrangement consisting of a nodal cubic curve and one hyperosculating conic, then  the arrangement is free with  exponents $(2,2)$.

b) If $C$ is an arrangement consisting of a nodal cubic curve and two distinct hyperosculating conics, then  the arrangement is free with  exponents $(3,3)$.

c) If $C$ is an arrangement consisting of a nodal cubic curve and three distinct hyperosculating conics, then  the arrangement is a $4$-syzygy curve with exponents $(5,5,5,5)$.

\end{thm}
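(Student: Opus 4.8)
The plan is to reduce everything to the numerical criteria of Theorem \ref{freet} and Theorem \ref{nfreet}, so I need, in each case, the total Tjurina number $\tau(C)$ and the invariant $r=\mdr(C)=\mdr(f)$ of the defining polynomial $f=f_E\cdot f_{Q_1}\cdots f_{Q_k}$ (with $1\le k\le 3$), and in part c) also the minimal free resolution of $AR(f)$.

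\emph{Step 1 (Tjurina number).} I would list the singular points of $C$: (i) the node of $E$, which sits at $(0:0:1)$ and, as a one-line substitution shows, lies on none of the $Q_i$, contributing $1$; (ii) for each conic $Q_i$ present, the sextactic point $s_i$, an $A_{11}$ singularity since $E$ and $Q_i$ meet there with two smooth branches of contact order $6$, and since $E\cdot Q_i=6$ by B\'ezout this point exhausts $E\cap Q_i$, contributing $11$; (iii) for each pair $Q_i,Q_j$ present, the $4$ nodes of $Q_i\cap Q_j$, which likewise exhaust $Q_i\cap Q_j$ by B\'ezout, contributing $4$. One then has to check that these three families are pairwise disjoint: no $s_i$ on a second conic, the three conics with no common point, and — automatic once $E\cap Q_i=\{s_i\}$ — no node of $Q_i\cap Q_j$ on $E$. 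Here Remark \ref{symnodal} is decisive: since the order-$3$ group $H$ permutes the conics cyclically and acts transitively on pairs, each incidence need only be verified for one representative configuration. Granting this, $\tau(C)=12$ for $d=5$, $\tau(C)=1+11+11+4=27$ for $d=7$, and $\tau(C)=1+3\cdot11+3\cdot4=46$ for $d=9$.

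\emph{Step 2 (the invariant $r$).} I would first show $r\ge 2$ uniformly. If $(a,b,c)\in AR(f)_m$ with $m\le 1$, then — $f_E$ being coprime to $f_{Q_1}\cdots f_{Q_k}$ — $f_E$ divides $af_{E,x}+bf_{E,y}+cf_{E,z}$, a form of degree $m+2\le 3$, so this form equals $\lambda f_E$ for a constant $\lambda$. By Euler's identity the triple $(3a-\lambda x,\,3b-\lambda y,\,3c-\lambda z)$ lies in $AR(E)$ in degree $\le 1$; since a nodal cubic is not a cone and has no linear Jacobian syzygy (a short direct check), this triple vanishes, so $(a,b,c)=\tfrac{\lambda}{3}(x,y,z)$, whence $af_x+bf_y+cf_z=\tfrac{\lambda}{3}\deg(f)\,f$, which is nonzero unless $\lambda=0$. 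Thus $AR(f)_0=AR(f)_1=0$ and $r\ge 2$. The exact values $r=2$, $r=3$, $r=5$ then come from the symbolic computation recorded in the paper (for $d=5$ one can add that $\tau(C)=12$ is too large to be compatible with $r\ge 3$, which already forces $r=2$); excluding the remaining small values of $r$ amounts to checking that none of the explicit, finite-dimensional spaces $AR(E)_{r'}$ propagates to a Jacobian syzygy of $f$.

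\emph{Step 3 (conclusion and the hard point).} For a), $r=2=(d-1)/2$ and $r^2-r(d-1)+(d-1)^2=4-8+16=12=\tau(C)$, so Theorem \ref{freet} gives freeness with exponents $(r,d-1-r)=(2,2)$. For b), $r=3=(d-1)/2$ and $9-18+36=27=\tau(C)$, so $C$ is free with exponents $(3,3)$. For c), $r=5>4=(d-1)/2$, so Theorem \ref{freet} rules out freeness, and $25-40+64=49\neq 47=\tau(C)+1$ rules out near-freeness by Theorem \ref{nfreet}; what remains is to show that $AR(f)$ is minimally generated by exactly four syzygies, all of degree $5$, i.e.\ that its resolution is $0\to S(-6)^2\to S(-5)^4\to AR(f)\to 0$ — this is consistent with $\tau(C)=46$ by a short Hilbert-series computation and is confirmed by \texttt{SINGULAR}. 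I expect the genuine obstacle to be precisely this last point, together with the exact determination of $r$ in b) and c): the criteria of Theorems \ref{freet}--\ref{nfreet} do not by themselves exclude, a priori, $C$ being nearly free with $r=2$ when $k=2$ or an $m$-syzygy curve with $r\in\{2,3,4\}$ when $k=3$, so one must rely either on the $AR(E)$-reduction sketched above or on a direct computation to pin these down.
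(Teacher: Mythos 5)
Your proposal is correct and follows essentially the same route as the paper: list the singularities to get $\tau(C)=12,\,27,\,46$, obtain $r=\mdr$ (ultimately from the \verb}SINGULAR} computation, exactly as the paper does), and conclude via Theorems \ref{freet} and \ref{nfreet}, with part c) resting on the computed minimal resolution of $AR(f)$. Your supplementary touches --- the lower bound $r\ge 2$ by restricting a low-degree syzygy of $f$ to a syzygy of the cubic, the du Plessis--Wall bound forcing $r=2$ in case a), and the Hilbert-series check that exponents $(5,5,5,5)$ with second syzygies in degree $6$ are consistent with $\tau=46$ --- are correct but not in the paper, which simply quotes the symbolic computation for these values.
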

\begin{rk}
    If $C$ is the arrangement in Theorem \ref{thmIO} $c)$, then $C$ is a {\it maximal Tjurina curve of type} $(d,r)=(9,5)$ as defined in \cite{maxTjurina}. A maximal Tjurina curve $C'$ is the analog of a free curve when $\mdr(C') \geq \deg(C')/2$. The fact that our curve $C$ is indeed maximal Tjurina follows from \cite[Theorem 3.1]{maxTjurina}.
\end{rk}

\begin{proof}[Proof of Theorem \ref{thmIO}]
To prove a) let us consider the arrangement $C_{i} = \{E,Q_{i}\}$ of degree $d=5$ for $i \in \{1,2,3\}$. Observe that $\tau(C_{i})=12$, since our arrangement has one $A_{1}$- and one $A_{11}$- singularity. 

Using \verb}SINGULAR} we compute $r:=\mdr(C_{i}) =2$, hence 
$$r^{2} - r(d-1) + (d-1)^2 = 2^2 - 2\cdot (5-1) + (5-1)^2 = 12 = \tau(C_{i}),$$
so by Theorem \ref{freet} the arrangement $C_{i}$ is free with exponents $(d_{1},d_{2}) = (r,d-1-r) = (2,2)$ for each $i \in \{1,2,3\}$.

To prove b), let  $C_{i,j} = \{E,Q_{i},Q_{j}\}$ of degree $d=7$ with $i\neq j$. Observe that $\tau(C_{i,j})=27$ since we have exactly $5$ nodes and $2$ singularities of type $A_{11}$. Moreover, using \verb}SINGULAR} we can check that $r:=\mdr(C_{i,j})=3$, therefore, again by Theorem \ref{freet}, we have
$$r^{2} - r(d-1) + (d-1)^2 = 3^2 - 3\cdot(7-1) + (7-1)^2 = 27 = \tau(C_{i,j}),$$
so the arrangement $C_{i,j}$ is free with exponents $(d_{1},d_{2}) = (r,d-1-r) = (3,3)$ for each pair $i \neq j$.

Finally, for c), let  $C_{1,2,3} = \{E,Q_{1},Q_{2},Q_{3}\}$. We can check directly, using \verb}SINGULAR}, that $C_{1,2,3}$ is a $4$-syzygy curve with $(d_{1},d_{2},d_{3},d_{4}) = (5,5,5,5)$, and this completes our proof.
\end{proof}

Nearly free arrangements can also be constructed using a nodal cubic $E$ and other configurations of highly tangent conics to $E$. The following example shows one possible such arrangement.
\begin{ex}
In this example, we return to the nodal cubic 
$E$ from \eqref{EE}. We recall that $s_1=(1:1:2)$ is a  sextactic point for $E$, and  the corresponding hyperosculating conic is given by
$$Q_{1} \, : \, 21(x^2+y^2 )-22xy-6(x+y)z+z^2 =0.$$
At the point $p= (2 :4 : 9)$ the osculating conic to $E$ is
$$Q \, : \, 2961x^2-2664xy+2394y^2-1104xz-321yz+32z^2=0 ,$$
and note that the intersection index at $p$ of curves $E$ and $Q$ is equal to $5$.
Using \verb}SINGULAR}, we can directly check that the conics $Q_{1}$ and $Q$ intersect along $4$ nodes. Consider the arrangement $C=\{E,Q_{1},Q\}$, which has one singularity of type $A_{11}$ at $s_1$, one singularity of type $A_{9}$ at $p$, and six points of type $A_{1}$. This leads to $\tau(C) = 26$, and using \verb}SINGULAR} we can check that $r=\mdr(C) = 3$, hence
$$r^2 - r(d-1) + (d-1)^2 = 3^2 - 3\cdot(7-1) + (7-1)^2 = 27 = \tau(C)+1,$$
so by Theorem \ref{nfreet} our curve $C$ is nearly free.
\end{ex}

\section{Smooth cubics and hyperosculating conics}
In this section we focus on free arrangements consisting of a single smooth cubic $E$ and its hyperosculating conics. The first result holds for any smooth cubic $E$, and it shows that if all the singularities of the arrangement not on $E$ are nodes, then the arrangement is never free, 
see Proposition \ref{PP1}. The remaining results in this section concern the arrangements obtained from the smooth Fermat cubic by adding some of its hyperosculating conics.

We know by  Coolidge theorem that every smooth cubic curve has exactly 
$ 3 \cdot 9 = 27$
sextactic points, and therefore we have exactly $27$ hyperosculating conics.
An arrangement of curves consisting of a smooth cubic and $k \in \{1,2, ..., 27\}$ hyperosculating conics will intersect at the sextactic points of the smooth cubic and at the intersection points of the conics. The intersections at the sextactic points will, by B\'ezout's theorem, constitute $k$ singularities of type $A_{11}$. The intersection points of the conics need to be considered more carefully. Again, by B\'ezout's theorem we know that the sum of the intersection multiplicities at these points is $4\cdot \binom{k}{2} = 2k(k-1)$, but there are different possible configurations of singularities.

If we take exactly $k$ hyperosculating conics, then the freeness is governed by the $2k(k-1)$ intersections among the conics. Let us consider the first situation that can potentially occur, namely when $k$ hyperosculating conics are intersecting only along nodes.
\begin{prop}
\label{PP1}
Let $\mathcal{EC}_{k}$ be an arrangement consisting of an elliptic curve $E$ and $k \geq 1$ hyperosculating conics, and assume that the hyperosculating conics are intersecting only along $2k(k-1)$ nodes. Then $\mathcal{EC}_{k}$ is never free. In particular, when $k=1$ the arrangement $\mathcal{EC}_{1}$ is nearly free with exponents $(2,3)$.
\end{prop}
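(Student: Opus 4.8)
The plan is to read off the degree and the total Tjurina number of $\mathcal{EC}_k$ and then feed these into the numerical criteria of Theorems \ref{freet} and \ref{nfreet}. Write $\mathcal{EC}_k = E\cup Q_1\cup\cdots\cup Q_k$; its degree is $d = 3+2k$. Each $Q_i$ is a hyperosculating conic of the smooth cubic $E$, so by B\'ezout's theorem $E$ and $Q_i$ meet in a single point with intersection multiplicity $6$, where the arrangement has two smooth branches with contact of order $6$, i.e.\ an $A_{11}$ singularity, exactly as in the nodal situation treated before Theorem \ref{thmIO}. By hypothesis the only other singular points are the $4\binom{k}{2}=2k(k-1)$ nodes arising from the conic--conic intersections. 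Hence
$$\tau(\mathcal{EC}_k) = 11k + 2k(k-1) = 2k^{2}+9k,$$
a value independent of the chosen cubic and of which hyperosculating conics are used.

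For the assertion that $\mathcal{EC}_k$ is never free I would argue by contradiction via Theorem \ref{freet}: if $\mathcal{EC}_k$ were free, then $r=\mdr(\mathcal{EC}_k)$ would be an integer root of $x^{2}-(d-1)x+\bigl((d-1)^{2}-\tau(\mathcal{EC}_k)\bigr)=0$. Substituting $d-1=2k+2$ and $\tau(\mathcal{EC}_k)=2k^{2}+9k$, the discriminant of this quadratic equals
$$4\,\tau(\mathcal{EC}_k)-3(d-1)^{2}=8k^{2}+36k-3(2k+2)^{2}=-4\bigl(k^{2}-3k+3\bigr),$$
which is strictly negative for every real $k$, since $k^{2}-3k+3$ has negative discriminant $-3$. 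Thus the quadratic has no real root, hence no integer root, and the equality required by Theorem \ref{freet} can never hold: $\mathcal{EC}_k$ is not free for any $k\geq1$.

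For $k=1$ we have $d=5$ and $\tau(\mathcal{EC}_1)=11$. By Theorem \ref{nfreet}, $\mathcal{EC}_1$ is nearly free exactly when $r=\mdr(\mathcal{EC}_1)$ satisfies $r^{2}-4r+16=12$, that is $r=2$, in which case the exponents are $(d_1,d_2)=(r,d-r)=(2,3)$. So the task reduces to showing $\mdr(\mathcal{EC}_1)=2$. Since $\mathcal{EC}_1$ is neither a union of lines nor smooth, $r\in\{1,2,3\}$ (the remaining possibility $r=d-1=4$ would force smoothness). The value $r=1$ is excluded because a linear Jacobian syzygy forces, by the du Plessis--Wall inequality, $\tau\geq(d-1)(d-2)=12>11$; the value $r=3$ lies in the range $2r\geq d$, where the maximal-Tjurina bound of \cite{maxTjurina} gives $\tau\leq(d-1)(d-1-r)+r^{2}-\binom{2r-d+2}{2}=4+9-3=10<11$. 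Hence $r=2$, and Theorem \ref{nfreet} shows $\mathcal{EC}_1$ is nearly free with exponents $(2,3)$. The only genuinely delicate point is this determination $\mdr(\mathcal{EC}_1)=2$, which must be done uniformly in the choice of cubic; everything else is formal manipulation of the two criteria, and for any explicit example the value $r=2$ can also be confirmed with \verb}SINGULAR}, as elsewhere in the paper.
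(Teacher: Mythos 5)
Your strategy coincides with the paper's: compute $d=2k+3$ and $\tau(\mathcal{EC}_k)=11k+2k(k-1)$, rule out freeness by observing that the quadratic in $r$ imposed by Theorem \ref{freet} has negative discriminant (your $-4(k^2-3k+3)$ is exactly the paper's $\triangle_r=-4k^2+12k-12$; handling all $k\ge 1$ at once rather than only $k\ge 2$ is fine), and for $k=1$ reduce everything to showing $\mdr(\mathcal{EC}_1)=2$ before applying Theorem \ref{nfreet}. Your exclusion of $r=1$ via the du Plessis--Wall lower bound $\tau\ge (d-1)(d-1-r)=12>11$, and of $r=3$ via the improved upper bound for $2r\ge d$ giving $\tau\le 10$, are correct and the latter is precisely the paper's use of \cite[Theorem 3.2]{duPCTC}.

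The one flawed step is your dismissal of $r=4$: the assertion that $\mdr(f)=d-1$ forces smoothness is false. Smoothness does imply $\mdr=d-1$ (the partials form a regular sequence, so all syzygies are Koszul), but the converse fails; the paper's own nodal cubic $E: x^3+y^3-xyz=0$ is singular and has no Jacobian syzygy of degree $\le 1$, hence $\mdr(E)=2=d-1$. The gap is harmless because the same du Plessis--Wall bound you already invoke for $r=3$ applies: for $r=4$ one has $2r\ge d$ and $\tau\le (d-1)(d-1-r)+r^2-\binom{2r-d+2}{2}=0+16-10=6<11$, so $r=4$ is impossible; equivalently, the upper bound is decreasing in $r$ on this range, which is how the paper can phrase the exclusion as ``if $r\ge 3$.'' For comparison, the paper sidesteps both $r=1$ and $r=4$ by bounding $r$ from below: the addition--deletion results of \cite{mdr} give $\mdr(\mathcal{EC}_1)\ge \mdr(E)=2$ since $E$ is a smooth cubic, leaving only $r\ge 3$ to be excluded. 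With the $r=4$ step repaired as above, your argument is complete and, like the paper's, uniform in the choice of the smooth cubic and of the hyperosculating conic.
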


\begin{rk}
\label{rkPP1}
Note that Proposition \ref{A2} on page \pageref{A2} implies that the assumption that the conics intersect only in $2k(k-1)$ nodes holds if and only if $k\leq 9$ and the conics are chosen in distinct sets $P_j$, sets which are defined in the proof of Proposition \ref{A2}.
\end{rk}

\begin{proof}

Let $\mathcal{EC}_{k} \, : \, f_{k}=0$ be the defining equation of degree $\deg(f_k)=2k+3$, with $k\geq 1$ and $r = \mdr(f_{k}) > 0$. 
We discuss first the case $k=1$. Then $\mathcal{EC}_{1}$ has one $A_{11}$-singularity and $\tau(\mathcal{EC}_{1})=11$. On one hand, by \cite{mdr}, $r \geq 2=\mdr(e)$ where $e=0$ is the defining equation of $E$. On the other hand, if $r \geq 3$, then by \cite[Theorem 3.2]{duPCTC} we have the inequality
$$\tau(\mathcal{EC}_{1}) \leq 3^{2} - 3\cdot(5-1)+(5-1)^2-\binom{3}{2}=10,$$
which is a contradiction. It means that $r=2$ and
$$r^2-r(d-1)+(d-1)^2=2^2-2\cdot (5-1)+(5-1)^2=12=\tau(\mathcal{EC}_{1})+1,$$
hence by Theorem \ref{nfreet} arrangement $\mathcal{EC}_{1}$ is nearly free with exponents $(2,3)$.

Now we treat the case $k\geq 2$. We have exactly $k$ singularities of type $A_{11}$ and $2k(k-1)$ nodes, so the total Tjurina number of $\mathcal{EC}_{k}$ is equal to
$$\tau(\mathcal{EC}_{k}) = 2k(k-1) + 11k.$$
Assume that $\mathcal{EC}_{k}$ is free. Then we have
$$(2k+2)^2  -r(2k+2) + r^{2} = \tau(\mathcal{EC}_{k}) = 2k(k-1) + 11k.$$
This is a quadratic polynomial in $r$, 
$$r^{2} -r(2k+2) + 2k^2-k+4 =0,$$
with discriminant 
$$\triangle_{r} = -4k^2 +12k-12,$$
which is strictly negative for $k \geq 2$. Since by assumption $r \in \mathbb{N}$, this is a contradiction.

\end{proof}
Now we  pass to the concrete situation where  we study the Fermat cubic curve $F: x^3+y^3+z^3=0$ and its hyperosculating conics. There are several groups acting on $\PP^2$ and leaving the curve $F$ invariant. Firstly, there is the group $G=\Z_3\times \Z_3$, where the two generators $g_1$ and $g_2$ act by
\begin{equation}
\label{eqA0}
g_1(x:y:z)=(\om x : y:z) \text{ and } g_2(x:y:z)=(x : \om y:z).
\end{equation}
Then there is the symmetric group $\SSS_3$ acting by permutation of coordinates. We denote by $\gamma \in \SSS_3$ a 3-cycle and by $\langle \gamma \rangle$ the cyclic subgroup of order 3 generated by $\gamma$.
Then we put together these actions by considering the semi-direct products $G' \subset G''$, where
$$G' \cong G \rtimes \langle \gamma \rangle \text{ and } G''\cong G \rtimes \SSS_3.$$
An element $t$ in these groups of automorphisms acts on points in $\PP^2$ in the obvious way
and on polynomials in $S$ by composition with $t^{-1}$.
Using these actions we can prove the following.
\begin{prop}
\label{A1}
The $27$ sextactic points of the Fermat cubic curve
correspond exactly to the $G'$-orbit of the point
$$s_1=(1:1:-\al),$$
where $\al=\sqrt[3]{2}$.
\end{prop}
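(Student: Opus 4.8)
The plan is to verify that $s_1=(1:1:-\al)$ is indeed a sextactic point of $F$, and then to show that its $G'$-orbit has exactly $27$ elements, which by Coolidge's theorem (applied to the smooth cubic $F$, giving $3\cdot 9=27$ sextactic points) forces the orbit to be the complete set of sextactic points.

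\medskip

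\textbf{Step 1: $s_1$ is sextactic.} First I would compute the osculating conic $Q_{s_1}$ to $F$ at $s_1$ using Definition \ref{def:Osculating Conic}, or more efficiently simply exhibit a conic $Q$ and check directly that $(F,Q)_{s_1}\ge 6$. For the Fermat cubic the partials are $f_x=3x^2$, $f_y=3y^2$, $f_z=3z^2$, so the Hessian $h$ is a scalar multiple of $xyz$, and none of the coordinate points nor $s_1$ lies on $H(F)$, so $s_1$ is neither singular nor an inflection point; the osculating conic is therefore well-defined at $s_1$. Checking $(F,Q_{s_1})_{s_1}\ge 6$ is a local intersection-multiplicity computation (parametrize the branch of $F$ at $s_1$ and substitute), and this is the step most amenable to a short \verb}SINGULAR} verification. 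Alternatively, one may invoke that $F$ has exactly $27$ sextactic points and that each of the $9$ inflectional triangle structures... but the cleanest route is a direct check that $s_1$ is sextactic.

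\medskip

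\textbf{Step 2: the stabilizer of $s_1$ in $G'$.} The group $G'\cong G\rtimes\langle\gamma\rangle$ has order $9\cdot 3=27$. I would show the stabilizer of $s_1$ in $G'$ is trivial. An element $g_1^ag_2^b$ sends $(1:1:-\al)$ to $(\om^a:\om^b:-\al)$, which equals $(1:1:-\al)$ only when $a\equiv b\pmod 3$; and if $a=b$ then $g_1^ag_2^a$ acts as scalar multiplication by $\om^a$, which fixes $s_1$ only for $a=0$. For the elements involving $\gamma$: a $3$-cycle $\gamma$ permutes the three coordinates, so $\gamma(s_1)$ has the $-\al$ in a different slot than $s_1$ (since the first two coordinates of $s_1$ are equal but the third is not), hence $\gamma(s_1)\ne s_1$ as points of $\PP^2$; combining $\gamma$ with any $g_1^ag_2^b$ still moves the coordinate carrying the entry $-\al$ (up to the $\om$-scalings, which cannot turn $-\al$ into $1$ in the projectivization because $|-\al|\ne 1$, or more carefully because $(\om^c : \om^d : \om^e \cdot(-\al))$ can never be proportional to $(1:1:-\al)$ unless the $-\al$ stays in the third slot). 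So the $G'$-orbit of $s_1$ has size $|G'|/|\mathrm{Stab}(s_1)|=27/1=27$ distinct points.

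\medskip

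\textbf{Step 3: $G'$ preserves sextactic points, hence conclude.} Each element of $G'$ is a projective automorphism fixing $F$ setwise; projective automorphisms preserve intersection multiplicities, and they carry osculating/hyperosculating conics of $F$ to osculating/hyperosculating conics of $F$. Hence $G'$ permutes the set of sextactic points of $F$. Since $s_1$ is sextactic, all $27$ points of its orbit are sextactic. By Coolidge's theorem $F$ has exactly $27$ sextactic points, so the orbit \emph{is} the full set of sextactic points. I expect the main obstacle to be Step 1 --- producing the osculating conic at $s_1$ and confirming the contact order is at least $6$ --- since this requires either running the Cayley--Moe second-Hessian machinery of Definition \ref{defSS} explicitly or a careful local parametrization; the orbit-size computation in Step 2 is elementary linear algebra over $\C$, and Step 3 is purely formal.
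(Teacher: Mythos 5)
Your argument is correct, but it reaches the key fact --- that $s_1=(1:1:-\al)$ is sextactic --- by a genuinely different route than the paper. You propose to verify the contact order $(F,Q_{s_1})_{s_1}\ge 6$ directly (local parametrization or \textsc{Singular}), whereas the paper never computes a contact order at $s_1$: it observes that $\SSS_3$ cannot act freely on the $27$ sextactic points (since $6 \nmid 27$), so some sextactic point is fixed by a nontrivial permutation, and an elementary analysis of permutation-fixed points lying on $F$ (after discarding $(0:1:-1)$ and $(0:-1:1)$, which are inflection points) forces such a point to be $(1:1:\al_0)$ with $\al_0^3=-2$, hence equal to $s_1$ after applying an element of $G$. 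Both routes then conclude identically: the $G'$-orbit of $s_1$ has $27$ elements and sits inside the set of exactly $27$ sextactic points, which $G'$ permutes. Your approach buys explicitness --- in particular your Step 2 stabilizer computation substantiates what the paper merely asserts is ``clear'' --- at the cost of an actual verification in Step 1, which you leave to be carried out; the cleanest way to finish it is the paper's own alternative, namely the factorization $H_2(F)=(x^3-y^3)(y^3-z^3)(x^3-z^3)$: the point $s_1$ lies on $F$ and on $x^3-y^3=0$ but not on the Hessian $xyz=0$, so by the Cayley--Moe characterization it is sextactic (equivalently, the explicit conic of Example \ref{exA1} does the job).

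One small slip in Step 2: $g_1^ag_2^a$ is not scalar multiplication by $\om^a$ --- it scales only $x$ and $y$, and a genuine scalar would fix every point of $\PP^2$, contradicting your own conclusion that it fixes $s_1$ only for $a=0$. The correct (and immediate) argument is the one you use elsewhere: if $(\om^a:\om^b:-\al)=(1:1:-\al)$, the third coordinates force the proportionality factor to be $1$, whence $\om^a=\om^b=1$. Combined with your correct observation that no element involving a nontrivial power of $\gamma$ can fix $s_1$ because $|\al|\neq 1$, the stabilizer is trivial and the orbit has $27$ elements, as needed.
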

\proof
If the symmetric group $\SSS_3$ acts in a free way on the set of sextactic points of $F$, we get a contradiction,  since  $|\SSS _3| = 6$ does not divide $27$, the number of sextactic points.

It follows that some sextactic points of $F$ are fixed under some permutation.
Note that the only points $(1:a:b)$, with $1 \ne a \ne b \ne 1$  and  $ab \ne 0$, that  are
fixed by some permutation in $\SSS_3$ satisfy either $a^3=b^3=1$ or $a^2=b^2=1$, but the curve $F$ does not contain any of these points. Moreover,  the points of the form $(0:a:b)$ are fixed by some
transposition, but only when either $a=1$ and   $b=-1$ or $a=-1$ and $b=1$. These  points are on $F$, but it can be checked that they are not sextactic points. It follows that the coordinates of a sextactic point are all non-zero, and two of them are equal. Using our permutation group, we may assume that the equal coordinates are on the first two positions, hence $s=(a:a:c)$. By dividing by $a \ne 0$ and by asking that $s \in F$, we see that 
$s=(1:1:\al_0)$ with $\al_0^3=-2$. Finally, using the group $G$ we can arrange that $ \al_0=-\sqrt[3]{2}$.

It is clear that the orbit of such an element under the group $G'$ has exactly 27 elements. An alternative proof of this result can be obtained using the formula for the second Hessian of $F$, namely
$$H_2(F)=(x^3-y^3)(y^3-z^3)(x^3-z^3),$$
see \cite{Sz} for explanations.
\endproof

\begin{ex}
\label{exA1}
Using \cite[Theorem 2.1]{Moe}, that goes back to Cayley, we see that the hyperosculating conic corresponding to the point
$s_1=(1:1:-\al)$ has the equation
\begin{equation}
\label{eqA1}
Q_{s_1} : \, q(1:1:-\al)=(x-y)^2-z(\al^2x +\al^2 y +2\al z)=0,
\end{equation}
for $  \al=\sqrt[3]{2}$.
To get the equations for {\it all the other hyperosculating conics}, we apply the elements of the group $G'$ to Equation \ref{eqA1}. For instance, to get the equation of the conic $Q_{s_{2}}$ corresponding to the sextactic point $s_2=(1:-\al:1)$ we use the $2$-cycle 
$$\gamma(x:y:z) = (x:z:y)$$  
and get $s_2=\gamma (s_1)$. Then
\begin{align}
\label{eqA11}
    \begin{aligned}
        Q_{s_2} : \, q(1:-\al:1)(x,y,z)&=q(1:1:-\al)\cdot \gamma^{-1}(x,y,z)\\
        &= q(1:1:-\al)(x,z,y)\\
        &=(x-z)^2-y(\al^2x +\al^2 z +2\al y)=0, 
    \end{aligned}
\end{align}
since $\gamma^{-1}=\gamma$ in this case.
Similarly, to get the equation of the conic $Q_{s_{3}}$ corresponding to the sextactic point $s_3=(1:\om:-\al)$,
we use the transformation $g_2$ defined in \eqref{eqA0}
which satisfies $s_3=g_2(s_1)$. This yields
\begin{align}
\label{eqA111}
    \begin{aligned}
        Q_{s_3}: q(1:\om:-\al)(x,y,z)&=q(1:1:-\al)\cdot g_2^{-1}(x,y,z)\\
        &=q(1:1:-\al)(x,\omega^2 y,z)\\
        &=(x-\om^2y)^2-z(\al^2x +\al^2 \om^2 y +2\al z)=0.
    \end{aligned}
\end{align}
\end{ex}
Our considerations from Example \ref{exA1} lead us to the following corollary.
\begin{cor}
\label{corA1}
The group $G'$ acts freely on the set of hyperosculating conics and on the set of pairs of hyperosculating conics. In particular, the number of $G'$-orbits on the set of pairs of hyperosculating conics is equal to 13.
\end{cor}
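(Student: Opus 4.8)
The plan is to deduce the corollary from Proposition \ref{A1} together with an elementary parity argument, so that essentially no new computation is needed. First I would record that $|G'| = |G|\cdot|\langle\gamma\rangle| = 9\cdot 3 = 27$ and that the osculating-conic construction is $G'$-equivariant: each $t\in G'$ preserves the Fermat cubic $F$, hence permutes its smooth non-inflectional points, and by the canonicity of $Q_p$ (Definition \ref{def:Osculating Conic}) one has $t\cdot Q_p = Q_{t\cdot p}$; in particular $t$ permutes the $27$ hyperosculating conics. The assignment $p\mapsto Q_p$ is moreover injective on sextactic points, since by B\'ezout $(F,Q_p)_p = 6 = 2\cdot 3$ forces $Q_p$ to meet $F$ only at $p$, so $Q_p$ determines $p$. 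Thus the $G'$-set of hyperosculating conics is isomorphic to the $G'$-set of sextactic points, which by Proposition \ref{A1} is a single orbit of cardinality $27 = |G'|$. A transitive action on a set whose cardinality equals the group order has trivial point stabilizers, so $G'$ acts freely, indeed simply transitively, on the $27$ hyperosculating conics. Equivalently, as already seen in Example \ref{exA1}, applying the $27$ distinct elements of $G'$ to $Q_{s_1}$ produces $27$ distinct conics.

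Next I would upgrade freeness on conics to freeness on unordered pairs. Let $P=\{Q_i,Q_j\}$ be a pair of distinct hyperosculating conics and let $\Gamma\subset G'$ be its stabilizer. Restriction of the action to the block $P$ defines a homomorphism $\Gamma \to \mathrm{Sym}(P)\cong\Z_2$. Since $|G'|=27$ is odd, so is $|\Gamma|$, and the only homomorphism from a group of odd order to $\Z_2$ is trivial; hence every $t\in\Gamma$ fixes both $Q_i$ and $Q_j$, and by the freeness established above $t$ is the identity. Therefore $G'$ acts freely on the set of pairs of hyperosculating conics.

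Finally, a free action of $G'$ on the $\binom{27}{2}=351$ unordered pairs partitions them into orbits each of size $|G'|=27$, so there are exactly $351/27 = 13$ orbits, as claimed. One could equally phrase the last two steps via Burnside's formula, but with a free action the count is just a division.

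The whole argument is bookkeeping once Proposition \ref{A1} is available; the only points requiring (minimal) care are the $G'$-equivariant identification of the set of hyperosculating conics with the set of sextactic points, which is what lets us transport the simply transitive action, and the observation that a group of odd order cannot interchange two elements of a set on which it already acts freely. I do not anticipate a genuine obstacle.
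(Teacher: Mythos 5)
Your proposal is correct and follows essentially the same route as the paper: freeness on the conics comes from the fact that they form a single $G'$-orbit of size $|G'|=27$ (you justify the equivariant bijection $p\mapsto Q_p$ more explicitly via B\'ezout, which the paper leaves implicit in Example \ref{exA1}), the pair case is ruled out by the odd order of $G'$ (your homomorphism $\Gamma\to\mathrm{Sym}(P)\cong\Z_2$ is just a repackaging of the paper's ``no element of order $2$'' argument), and the count $\binom{27}{2}=27\cdot 13$ gives the $13$ orbits.
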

\proof
The fact that the group $G'$ acts freely on the set of hyperosculating conics follows from Proposition \ref{A1} since this set is just a $G'$-orbit of order $|G'|=27$. To prove the claim for the pairs, let
$\{Q,Q'\}$ be a pair of hyperosculating conics and $t \in G'$ such that
$\{Q,Q'\}=\{tQ,tQ'\}$. If $Q=tQ$ it follows from the first part that $t=e$. Assume now that $Q'=tQ$ and $Q=tQ'$. It follows that
$t^2Q=Q$ and hence $t^2=e$. But the group $G'$ has odd order $27$, and hence it has no element of order $2$. The last claim follows from the fact that the number of  pairs of hyperosculating conics is
$$\binom{27}{2}=27 \cdot 13.$$
\endproof

\medskip
In the sequel, we need the following elementary result.
\begin{lem}
\label{L1}
Let $L_1:  \ell_1=0$ and $L_2: \ell_2=0$ be two distinct lines in $\PP^2$, and let $p=L_1 \cap L_2$. Then any smooth conic which is tangent to
$L_1$ at the point $p$ has an equation of the form
$$Q: \ell_1 \ell +\ell_2^2=0,$$
where $\ell$ is a linear form in $S$.
\end{lem}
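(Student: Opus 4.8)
The plan is to choose coordinates adapted to the configuration and then simply compare the two sides of the claimed equation. First I would set up a coordinate system so that $p = L_1 \cap L_2$ becomes a convenient point, say $p = (0:0:1)$, with $L_1 : x = 0$ (i.e.\ $\ell_1 = x$) and $L_2 : y = 0$ (i.e.\ $\ell_2 = y$); since $L_1, L_2$ are distinct lines this is always achievable by a projective change of coordinates, and it suffices to prove the statement in these coordinates because the asserted form $\ell_1 \ell + \ell_2^2 = 0$ is preserved under linear changes of variables.

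Next I would write a general smooth conic $Q$ passing through $p$ and being tangent to $L_1$ at $p$, and extract the constraints. A general conic is $Q : a x^2 + b x y + c y^2 + d x z + e y z + g z^2 = 0$. Passing through $p=(0:0:1)$ forces $g = 0$. Tangency to $L_1 = \{x=0\}$ at $p$ means that the restriction of the conic's defining polynomial to the line $L_1$, namely $c y^2 + e y z$, must vanish to order $2$ at $p$, i.e.\ must be a scalar multiple of $y^2$; equivalently $e = 0$. (One checks this is the correct reading of tangency: the intersection $Q \cap L_1$ is cut out on $L_1 \cong \PP^1$ by $c y^2 + e y z$, and $p$ corresponds to $y=0$, so $(Q, L_1)_p \geq 2$ exactly when $e = 0$.) Hence $Q : a x^2 + b x y + d x z + c y^2 = 0$, which we can rewrite as $x(a x + b y + d z) + c y^2 = 0$.

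Finally I would handle the scalar $c$. If $c \neq 0$ we may rescale the whole equation by $1/c$ and obtain $x \cdot \ell + y^2 = 0$ with $\ell = (a x + b y + d z)/c$ a linear form, which is exactly the claimed shape $\ell_1 \ell + \ell_2^2 = 0$. The only thing left is to rule out $c = 0$: in that case $Q : x(a x + b y + d z) = 0$ is a product of two linear forms, hence not a smooth (indeed not even irreducible) conic, contradicting the hypothesis that $Q$ is smooth. This also uses that $x$ itself is a genuine linear factor, so the conic is reducible regardless of the other factor. That finishes the argument.

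The computation here is entirely routine; there is no real obstacle. The only point requiring a little care is making sure the algebraic condition ``$e=0$'' is genuinely equivalent to the geometric tangency condition $(Q,L_1)_p\ge 2$, and that one is reading ``tangent to $L_1$ at $p$'' rather than ``tangent to $L_2$''; stating the intersection-multiplicity computation on the line $L_1$ explicitly, as above, makes this transparent. One should also remark at the outset why it is legitimate to fix coordinates, i.e.\ that both the hypotheses (two distinct lines, a smooth conic tangent to $L_1$ at their intersection) and the conclusion (the existence of a linear form $\ell$ with $Q : \ell_1\ell + \ell_2^2 = 0$) are invariant under $\mathrm{PGL}_3(\C)$.
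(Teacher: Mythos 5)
Your proposal is correct and follows essentially the same route as the paper: normalize to $L_1: x=0$, $L_2: y=0$, $p=(0:0:1)$, impose passage through $p$ and tangency to $L_1$ on a general conic, and use smoothness to see the coefficient of $\ell_2^2$ is nonzero. Your extra remarks (the explicit intersection-multiplicity reading of tangency and the $\mathrm{PGL}_3$-invariance justifying the choice of coordinates) are fine but only make explicit what the paper leaves implicit.
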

\proof
Without loss of generality, let $L_1:x=0$ and $L_2:y=0$, and then $p=(0:0:1)$.
A general conic is given by
$$Q: ax^2+by^2+cz^2+dxy+exz+fyz=0,$$
where $a,b,c,d,e,f \in \C$. Since $p \in Q$, we have $c=0$. Since the tangent at $p$ is $L_1$, it follows that $f=0$. Thus
$$Q: x(ax+dy+ez) +by^2=0.$$
Since $Q$ is smooth, it follows that $b \ne 0$, hence we can take $b=1$.
\endproof
Having this lemma, we can present our next result.
\begin{prop}
\label{A2}
The $27$  hyperosculating conics of the Fermat cubic 
can be partitioned into $9$ sets  $P_j$ for $j\in\{1, \ldots, 9\}$, each set $P_j$ containing $3$ conics, such that the following holds: 
\begin{enumerate}
\item[a)] If two conics $Q$ and $Q'$ belong to the same set $P_j$, then
the intersection $Q\cap Q'$ consists of two nodes $A_1$ and one tacnode $A_3$.
\item[b)] If two conics $Q$ and $Q'$ do not belong to the same set $P_j$, then
the intersection $Q\cap Q'$ consists of four nodes $A_1$.

\end{enumerate}
\end{prop}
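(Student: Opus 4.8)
The plan is to exploit the group actions $G'$ and $G''$ to reduce the problem to a small number of explicit pairwise intersection computations. First I would set up notation: the hyperosculating conic $Q_{s_1}$ at $s_1=(1:1:-\al)$ is given explicitly in \eqref{eqA1}, and by Proposition \ref{A1} all $27$ conics are obtained as $G'$-translates of $Q_{s_1}$. Since $G'$ acts freely and transitively on the $27$ conics (Corollary \ref{corA1}), I would index the conics by group elements and observe that the tangency relation ``$Q$ and $Q'$ meet in a tacnode'' is $G''$-invariant (indeed $G'$-invariant). Thus the partition into sets $P_j$, if it exists, must be a union of cosets of some subgroup; the natural candidate is to take $P_j$ to be the orbits of the normal subgroup $G=\Z_3\times\Z_3$ acting on the $27$ conics — this gives $27/3=9$ sets of $3$ conics each, since $G$ has order $9$ but its action partitions the $27=|G'|$ conics into $|G'|/|G|=3$ orbits of size $9$... so instead I would look for a subgroup of order $3$, and the correct choice is the subgroup generated by $\gamma$ (or a conjugate), giving $27/3=9$ cosets. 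I would define $P_j$ to be these $9$ cosets of $\langle\gamma\rangle$-translates, equivalently the $\langle\gamma\rangle$-orbits on the set of conics.

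The heart of the proof is then two explicit local computations. For part a), I would take the three conics $Q_{s_1}, Q_{\gamma s_1}=Q_{s_2}, Q_{\gamma^2 s_1}$ (using the equations \eqref{eqA1} and \eqref{eqA11}), compute the intersection $Q_{s_1}\cap Q_{s_2}$ directly — by B\'ezout the intersection multiplicities sum to $4$ — and verify via \verb}SINGULAR} (or by hand, resolving the resultant) that it consists of two reduced points and one point of multiplicity $2$ with a tacnode ($A_3$) singularity; by the $\langle\gamma\rangle$-invariance the same holds for the other two pairs within this $P_j$, and then by transitivity of $G'$ on the cosets it holds for all sets $P_j$. To pin down that the double contact point is genuinely a tacnode, I would use Lemma \ref{L1}: the common tangent line $L_1$ at the contact point and a transversal line $L_2$ through it let me write both conics in the normal form $\ell_1\ell+\ell_2^2$, and comparing the two linear forms $\ell$ shows the two branches agree to order $2$ but not $3$, which is exactly an $A_3$. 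For part b), I would pick one representative pair of conics lying in different cosets, e.g.\ $Q_{s_1}$ and $Q_{s_3}=Q_{g_2 s_1}$ from \eqref{eqA111}, check by direct computation that the four intersection points are distinct (hence four nodes), and argue that $G'$ acts transitively enough on pairs-in-distinct-cosets to cover all remaining cases — or more safely, enumerate the finitely many $G'$-orbits of such pairs and check one representative of each.

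The main obstacle, I expect, is the bookkeeping in the final transitivity argument for part b): the $13$ $G'$-orbits on pairs of conics (from Corollary \ref{corA1}) split into those internal to a coset and those spanning two cosets, and I need to know exactly how many of the latter there are and that a single representative computation per orbit suffices. I would resolve this by noting that $G''=G\rtimes\SSS_3$ acts too, and $\SSS_3/\langle\gamma\rangle$ permutes the cosets, cutting down the number of cases further; combined with \verb}SINGULAR} verification on the handful of remaining representatives, this closes the argument. A secondary technical point is confirming the singularity types are exactly $A_1$ and $A_3$ (not, say, $A_2$ or a worse $A_k$) — this follows from the conics being smooth (two smooth branches can only meet in an $A_{2k-1}$) together with the order-of-contact computation, so a tangential-but-not-osculating contact is forced to be $A_3$.
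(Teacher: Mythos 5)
Your overall strategy (use the group action to reduce to a handful of explicit intersection checks, and use Lemma \ref{L1} plus the ``two smooth branches meet in an $A_{2k-1}$'' observation to identify the tacnode) is exactly the paper's, but your identification of the partition is wrong, and this is a genuine gap rather than a cosmetic one. The sets $P_j$ are \emph{not} the $\langle\gamma\rangle$-orbits on the $27$ conics. The correct grouping puts together the three conics that pass through a common point of the nine-point orbit of $p_1=(1:1:0)$: by \eqref{eqA1}, the three conics $(x-y)^2-z(\al^2x+\al^2y+2\al z)=0$, one for each cube root $\al$ of $2$, all have the form $(x-y)^2+z\ell=0$, hence are all tangent to the line $z=0$ at $p_1$, and this triple is $P_1$. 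Under the simply transitive $G'$-action these triples correspond to the left cosets of the stabilizer $\Fix(p_1)=\langle g_1g_2\rangle$ (a diagonal scaling, cf.\ Remark \ref{rkA2}), which is not conjugate to $\langle\gamma\rangle$ and not normal in $G'$ (so one must also be careful not to conflate ``orbits of a subgroup'' with ``cosets,'' as you do). By contrast, the $\langle\gamma\rangle$-orbit of $Q_{s_1}$ consists of conics tangent to the three \emph{different} coordinate lines at three \emph{different} points: for $\gamma(x:y:z)=(y:z:x)$ one gets $\gamma Q_{s_1}=Q_{s_2}$ from \eqref{eqA11}, tangent to $y=0$ at $(1:0:1)$. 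This pair $(Q_{s_1},Q_{s_2})$ lies in two different sets $P_j$ of the true partition and meets in four distinct nodes, so part a) is false for your proposed partition; the very \verb}SINGULAR} computation you plan for $Q_{s_1}\cap Q_{s_2}$ would refute, not confirm, the tacnode.

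Once the partition is corrected, your argument does essentially close along the paper's lines: part a) needs no computation beyond noting that any two conics of the same $P_j$ are tangent to the same line at the same point $p_j$ (Lemma \ref{L1}), which forces an $A_3$ there and two further nodes by B\'ezout; and for part b) one uses Corollary \ref{corA1}: of the $13$ $G'$-orbits of pairs, exactly one consists of the within-$P_j$ pairs (Remark \ref{rkA2}), so $12$ representative cross-pairs must be checked, which is done by computer. Your worry about ``how the $13$ orbits split'' is thus resolved by the correct partition itself, not by invoking $G''$; note also that left multiplication by $G'$ permutes left cosets of $\langle g_1g_2\rangle$, which is what makes the reduction to one representative per orbit legitimate, whereas $G'$ does not in general permute the orbits of a non-normal subgroup such as $\langle\gamma\rangle$.
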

\proof
Note that the equations of the three conics $Q_{s_1}$ given in \eqref{eqA1}, corresponding to each value of $-\alpha$, all have the form
$$(x-y)^2+z\ell=0,$$
for various linear forms $\ell$. Using Lemma \ref{L1}, with $L_1: z=0$ and $L_2: x-y=0$, it follows that these 3 conics pass through the point 
$$p_1=(1:1:0)$$
and are tangent to the line $z=0$. Let $P_1$ denote set of hyperosculating conics that vanish at $p_{1}$. Observe that $P_{1}$ is the set of the form $\{Q_{s_1}  \  |  \  \alpha^3=2\}$, in other words this is the set of all hyperosculating conics which vanish at $p_1$. 
Indeed, such a conic must be in the ideal generated by $(x-y)$ and $z$,
and, with this remark in hand, Example \ref{exA1} proves this claim.
The orbit of the point $p_1$ under the action of $G'$ or $G''$ consists of the following $9$ points:
\begin{gather*}
    \begin{gathered}
        p_1=(1:1:0), \; p_2=(1:\om:0), \; p_3=(1:\om ^2:0),\\
        p_4=(0:1:1),  \; p_5=(0:1:\om), \; p_6=(0:1:\om ^2),\\
        p_7=(1:0:1),  \; p_8=(\om:0:1), \; p_9=(\om ^2:0:1),
    \end{gathered}
\end{gather*}
where $\om^3=1$, $\om \ne 1$. Similarly, let $P_l$ for the remaining values of $j$ be the set of all the hyperosculating conics which vanish at $p_l$. Since the points $p_l$ are in one $G'$-orbit, the sets $P_l$ are also in one $G'$-orbit, due to their definition. Hence, to prove claim $a)$, it is enough to check the intersection $Q \cap Q'$ only when $Q,Q' \in P_1$.
And this can be done by using the beginning of our proof or by a direct check with \verb}SINGULAR}.
With this choice, the claim $a)$ is proved.
For claim $b)$, using Corollary \ref{corA1}, it is enough to check for one pair in each of the $13$ orbits. But among these $13$ orbits there is the orbit of the pairs of conics belonging to the same set $P_l$. Hence we have to check only $12$ pairs. This was done by a direct check with \verb}SINGULAR}, see \cite{code}.
\endproof

\begin{rk}
\label{rkA2}
Consider the action of the group $G'$ on the set $\PPP=\{p_1, \ldots, p_9\}$. Then the isotropy group $\Fix(p_1)$ is the cyclic group of order
3 generated by 
$$t=g_1g_2:\,\, (x:y:z) \mapsto (\om x: \om y:z)=(x:y: \om^2z).$$
Note that this group cyclically permutes the three conics
$Q_{s_1}$ that pass through the point $p_1$. Indeed, one has
$$q(1:1: -\be)\cdot t^{-1}(x,y,z)=q(1:1: -\be)(\om^2x , \om^2y ,z)$$
$$=\om(x-y)^2-\om z'((\be \om)^2x +(\be \om)^2y + (\be \om)z')=0,$$
where we set $z'=z/\om=z\om^2$. Hence after division by $\om$ we get the equation for the curve $Q$, corresponding to the sextactic point $$(1:1:-\be\om).$$ The same property holds for any isotropy group $\Fix(p_j)$, namely it cyclically permutes the $3$ conics in $P_j$. This shows, in particular, that the $3$ pairs formed with the conics in $P_j$ are in the same $G'$-orbit.
\end{rk}

Recall that the situation where we have an arrangement consisting of  any smooth cubic and exactly one 
hyperosculating conic was considered in Proposition \ref{PP1}.
Now we consider the case of the Fermat cubic with several 
hyperosculating conics.

\begin{thm}
\label{thm2con}
Let $\mathcal{EC}_{2}$ be an arrangement consisting of the Fermat cubic curve $F$ and two hyperosculating conics $Q$ and $Q'$.
\begin{enumerate}
\item[a)] The two conics $Q$ and $Q'$ belong to the same set $P_j$ if and only if
the arrangement $\mathcal{EC}_{2}$ is free with exponents $(3,3)$.
\item[b)] The two conics $Q$ and $Q'$ do not belong to the same set $P_j$ if and only if
the arrangement $\mathcal{EC}_{2}$ is nearly free with  exponents $(3,4).$

\end{enumerate}

\end{thm}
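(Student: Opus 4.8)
The plan is to derive both parts from the numerical criteria of Theorems~\ref{freet} and~\ref{nfreet}. First I would describe the singularities of $\mathcal{EC}_{2}$, which has degree $d=7$. Since $F$ is smooth and the two hyperosculating conics $Q,Q'$ are smooth conics (being osculating conics at non-inflection points), every singular point of $\mathcal{EC}_{2}$ lies on a pairwise intersection. By B\'ezout's theorem $Q$ (resp.\ $Q'$) meets $F$ only at its sextactic point $s$ (resp.\ $s'$) with intersection multiplicity $6$, which produces an $A_{11}$ singularity since the two local branches are smooth; as $s\ne s'$ these are two distinct points, and $Q\cap Q'\cap F=\emptyset$ because $Q\cap F=\{s\}$ and $Q'\cap F=\{s'\}$ as sets. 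Hence the only remaining singularities come from $Q\cap Q'$, and they are described by Proposition~\ref{A2}. This gives $\tau(\mathcal{EC}_{2})=2\cdot 11+2\cdot 1+3=27$ when $Q,Q'$ lie in a common set $P_{j}$, and $\tau(\mathcal{EC}_{2})=2\cdot 11+4\cdot 1=26$ otherwise; note that in the latter case the equation $r^{2}-r(d-1)+(d-1)^{2}=26$ has no integral solution, so $\mathcal{EC}_{2}$ cannot be free there.

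Next I would pin down $r:=\mdr(\mathcal{EC}_{2})$. Since $F\subset\mathcal{EC}_{2}$ and $\mdr(F)=2$, we have $r\ge 2$ by \cite{mdr}, and I expect $r=3$ in every case, to be verified with \verb}SINGULAR}. This computation is finite because being free or nearly free, together with the exponents, are invariant under the $G'$-action on $\PP^{2}$, which preserves $F$ and permutes the hyperosculating conics: by Corollary~\ref{corA1} and Remark~\ref{rkA2} the $27$ pairs of conics lying in a common $P_{j}$ form a single $G'$-orbit, while the pairs not in a common $P_{j}$ split into $12$ orbits, so it suffices to treat one pair inside a common $P_{j}$ and one representative of each of the remaining $12$ orbits. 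Granting $r=3$, the value $3^{2}-3\cdot 6+6^{2}=27$ equals $\tau(\mathcal{EC}_{2})$ in the first case and $\tau(\mathcal{EC}_{2})+1$ in the second, so Theorem~\ref{freet} yields freeness with exponents $(r,d-1-r)=(3,3)$, and Theorem~\ref{nfreet} yields near-freeness with exponents $(r,d-r)=(3,4)$.

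Finally, the equivalences follow formally. Proposition~\ref{A2} provides a dichotomy for a pair $\{Q,Q'\}$ of hyperosculating conics: either the two conics lie in a common $P_{j}$, or they do not. The two implications established above attach to these alternatives the mutually exclusive conclusions \emph{free with exponents $(3,3)$} and \emph{nearly free with exponents $(3,4)$}, so each conclusion forces the corresponding alternative, which is exactly the remaining direction of a) and b).

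The main obstacle is the step $r=3$. For a pair inside a common $P_{j}$ the numerical data alone still allow $r=2$, which by Theorem~\ref{nfreet} would make $\mathcal{EC}_{2}$ nearly free with exponents $(2,5)$ instead of free, and I do not see how to exclude this without the \verb}SINGULAR} computation on the $1+12$ orbit representatives. A more conceptual alternative would be to add the second conic $Q'$ to the nearly free arrangement $\{F,Q\}$ of exponents $(2,3)$ from Proposition~\ref{PP1} by means of an addition-type theorem for plane curves, but controlling $\mdr$ after such an addition is itself delicate, so I would keep the symbolic computation as the backbone of the argument.
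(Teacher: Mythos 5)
Your proposal is correct and follows essentially the same route as the paper: determine the singularities via Proposition \ref{A2} to get $\tau=27$ or $26$, reduce to one representative of the same-$P_j$ orbit plus the $12$ remaining orbits using the $G'$-action, verify $r=3$ with \verb}SINGULAR}, and conclude via Theorems \ref{freet} and \ref{nfreet}. Your explicit handling of the converse directions (the dichotomy being exhaustive with mutually exclusive conclusions) and the observation that $\tau=26$ already rules out freeness are small additions the paper leaves implicit, but the argument is the same.
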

\begin{proof}
As in the previous case, our proof is based on \verb}SINGULAR}. We start with a general observation regarding the value of ${\rm mdr}$ for our arrangements. Using our code in \verb}SINGULAR}, we check that in both cases $a)$ and $b)$ one has
$$ r=\mdr(\mathcal{EC}_{2})=3.$$
In case $a)$, it is enough to check this property for only one pair, since all pairs in $a)$ are in the same $G$-orbit. In case $b)$, however, we must check the $12$ cases mentioned at the end of the proof of Proposition \ref{A2}.
In both cases our arrangement has degree $d=7$, but we have different singularities. In case a), our arrangement have two singularities of type $A_{11}$, one singularity of type $A_{3}$, and two singularities of type $A_{1}$, hence $\tau(\mathcal{EC}_{2})=27$. Using Theorem \ref{freet}, we can verify that
$$r^2-r(d-1)+(d-1)^2= 3^2 - 3\cdot(7-1) + (7-1)^2 =27= \tau(\mathcal{EC}_{2}),$$
hence $\mathcal{EC}_{2}$ is free with exponents $(d_{1},d_{2}) = (r, d-1-r) = (3,3)$.

In case $b)$, our arrangement has two singularities of type $A_{11}$ and  four singularities of type $A_{1}$, hence $\tau(\mathcal{EC}_{2})=26$. We use Theorem \ref{nfreet}, namely
$$r^2-r(d-1)+(d-1)^2 =  3^2 - 3\cdot(7-1) + (7-1)^2 = 27= \tau(\mathcal{EC}_{2})+1,$$
which shows that this curve is nearly free with exponents $(d_{1},d_{2}) = (r,d-r) = (3,4)$.
\end{proof}

If we add more hyperosculating conics, i.e., for higher values of $k$, we need to determine all singular points of the conic arrangement consisting of other subconfigurations of $27$ hyperosculating conics. It turns out that this is a very elaborate task. For $k=3$ there are a vast number $\binom{27}{3}$ of different cases to check. To get a glimpse of the possibilities and difficulties that occur for higher values of $k$ we round off this paper by studying one type of arrangements of curves consisting of the Fermat cubic curve and three hyperosculating conics.
\begin{thm}
\label{thm3con}
Let $\mathcal{EC}_{3}$ be an arrangement consisting of the Fermat cubic curve and exactly three hyperosculating conics $Q$, $Q' $ and $Q''$, which belong to the same set $P_j$.
Then the arrangement $\mathcal{EC}_{3}$ is free with exponents $(3,5)$.

\end{thm}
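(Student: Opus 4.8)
The plan is to follow the same template as the proof of Theorem \ref{thm2con}: compute the degree, enumerate all singularities of the arrangement, sum up the Tjurina number, determine $r=\mdr(\mathcal{EC}_3)$ with \verb}SINGULAR}, and then apply Theorem \ref{freet}. The degree is immediate: the Fermat cubic contributes $3$ and three conics contribute $6$, so $d=9$, and hence $d-1=8$. For a free curve of degree $9$ with exponents $(3,5)$ we need $r=3$ and $\tau(\mathcal{EC}_3)=r^2-r(d-1)+(d-1)^2=9-24+64=49$. So the two things to verify are $\tau(\mathcal{EC}_3)=49$ and $\mdr(\mathcal{EC}_3)=3$.

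First I would account for the singularities. By B\'ezout, each conic $Q$, $Q'$, $Q''$ meets $F$ at its sextactic point with an $A_{11}$-singularity (contact of order $6$ between two smooth branches), giving $3\cdot 11 = 33$. Next, the three conics lie in the same $P_j$, so by Proposition \ref{A2}a) each of the three pairs $\{Q,Q'\}$, $\{Q,Q''\}$, $\{Q',Q''\}$ meets in two nodes $A_1$ and one tacnode $A_3$. Here a key geometric observation (already implicit in the proof of Proposition \ref{A2}) is that all three conics in $P_j$ pass through the common point $p_j$ and are tangent there to a common line; so the "tacnode" of each pair is located at the single point $p_j$, and in fact all three conics are mutually tangent at $p_j$. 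Thus the local singularity of $Q\cup Q'\cup Q''$ at $p_j$ is that of three smooth branches pairwise tangent to first order — I would identify its Tjurina number (an ordinary tacnode-type point of three branches; one can compute $\tau$ of $\{y^2=0,\ y=xz,\ y=2xz\}$-type locally, or just read it off \verb}SINGULAR}). The remaining intersections among the conics are the $2\cdot 3 = 6$ nodes coming from the three pairs (two per pair), all distinct and away from $p_j$. Summing: $33$ (from the $A_{11}$'s) $+\ \tau(\text{triple point at }p_j)\ +\ 6$ should equal $49$, forcing $\tau(\text{triple point})=10$. That is the value I expect \verb}SINGULAR} to confirm for three smooth conic-branches pairwise simply tangent at one point.

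Then I would invoke \verb}SINGULAR} — via the code at \cite{code} — to certify that $r=\mdr(\mathcal{EC}_3)=3$; because all such triples $\{Q,Q',Q''\}\subset P_j$ lie in one $G'$-orbit (by Remark \ref{rkA2}, the isotropy group $\Fix(p_j)$ permutes the three conics of $P_j$ cyclically, and the $P_j$ themselves form a single $G'$-orbit), it suffices to run the computation for one representative triple, e.g. the three conics through $p_1=(1:1:0)$ from Example \ref{exA1}. Finally, with $d=9$, $r=3$, $\tau(\mathcal{EC}_3)=49$, Theorem \ref{freet} applies: $r\le(d-1)/2=4$ and $r^2-r(d-1)+(d-1)^2=49=\tau(\mathcal{EC}_3)$, so $\mathcal{EC}_3$ is free with exponents $(d_1,d_2)=(r,d-1-r)=(3,5)$.

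The main obstacle is the local analysis at $p_j$: one must be sure the three conics really do all pass through $p_j$ with a common tangent (so the three pairwise tacnodes coincide at one point rather than being three separate $A_3$'s) and then correctly compute the Tjurina number of that triple point. Getting this wrong would throw off $\tau$ and hence the whole freeness conclusion. The safe route — and the one consistent with the rest of the paper — is to let \verb}SINGULAR} compute the full singularity data and $\tau(\mathcal{EC}_3)$ directly for the representative triple, using the group action to reduce to that single case, and then apply Theorem \ref{freet}.
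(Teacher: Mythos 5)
Your proposal is correct and follows essentially the same route as the paper: reduce to one representative triple in $P_1$ via the $G'$-action, tally the singularities ($3$ points of type $A_{11}$, $6$ nodes, and one triple point where the three conics are mutually tangent with local Tjurina number $10$, which the paper identifies as a $J_{2,0}$ singularity with $\tau_p=\mu_p=10$), obtain $\tau(\mathcal{EC}_3)=49$, certify $r=\mdr=3$ with \verb}SINGULAR}, and conclude by Theorem \ref{freet}. Just present $\tau_p=10$ as a computed fact (from the local normal form or \verb}SINGULAR}) rather than as "forced" by the desired total, so the argument is not circular.
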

\begin{proof}  Note that in view of the $G'$-action, it is enough to perform computations only for the triple of conics in $P_1$.

First observe that, since the arrangement consists of the Fermat cubic curve and three hyperosculating conics, we have $d=9$ and exactly $3$ singularities of type $A_{11}$.   Locally at the common intersection point $p$ of the 3 conics $Q,Q'$ and $Q''$, the singularity obtained is a merger of $3$ tacnodes, i.e., each pair of conics intersects at that point as a tacnode, and we have exactly $3$ such pairs. Using \verb}SINGULAR} we can find the normal form of this singularity which is 
$$g(u,v) = u^3+u^2 v^2 + uv^5 + v^6 .$$
Based on the above data, we verify that our singular point is a ${\rm J}_{2,0}$ singularity according to Arnold's classification of singularities of corank $2$ and $3$-jet equal to a perfect cube, see \cite[page 248]{Arnold}.
Furthermore, we can check that its local Tjurina number is equal to the local Milnor number, i.e., $\tau_{p} (\mathcal{EC}_{3})= \mu_{p}(\mathcal{EC}_{3})=10$.
Moreover, we have additionally $6$ singularities of type $A_{1}$ as the remaining intersections of $Q, Q', Q''$.
Let $f_3=0$ be the defining equation of $\mathcal{EC}_{3}$. By the above description, in total,  $\tau(\mathcal{EC}_{3})=49$, and using \verb}SINGULAR} we compute $r=\mdr(f_3)=3$. Using Theorem \ref{freet}, we have
$$r^2-r(d-1)+(d-1)^2=  3^{2} - 3\cdot(9-1) + (9-1)^2 = 49 = \tau(\mathcal{EC}_{3}),$$
hence $\mathcal{EC}_{3}$ is free with exponents $(d_{1}, d_{2}) = (r,d-1-r) = (3,5)$. 

\end{proof}

\section*{Conflict of Interests}
We declare that there is no conflict of interest regarding the publication of this paper.
\section*{Acknowledgments}
We would like to thank anonymous referees for many very useful comments
that allowed us to improve the paper.

Piotr Pokora is partially supported by The Excellent Small Working Groups Programme \textbf{DNWZ.711/IDUB/ESWG/2023/01/00002} at the University of the National Education Commission Krakow.

Giovanna Ilardi was partially supported by GNSAGA-INDAM.

\end{document}